\newtheorem{theorem}{Theorem}[section]
\theoremstyle{theorem}
\newtheorem{definition}[theorem]{Definition}
\newtheorem{proposition}[theorem]{Proposition}
\newtheorem{corollary}[theorem]{Corollary}
\theoremstyle{remark}
\numberwithin{equation}{section}
\def\Ddots{\mathinner{\mkern1mu\raise\p@
\vbox{\kern7\p@\hbox{.}}\mkern2mu
\raise4\p@\hbox{.}\mkern2mu\raise7\p@\hbox{.}\mkern1mu}}
\def\a{\alpha}
\def\la{\lambda}
\title{Factorial Schur functions via the six vertex model}
\author{Peter J. McNamara\\
\small Department of Mathematics\\[-0.8ex]
\small Massachusetts Institute of Technology, MA 02139, USA\\[-0.8ex]
\small \texttt{petermc@math.mit.edu}}
\date{October 31, 2009}
\begin{document}

\maketitle

\abstract{For a particular set of Boltzmann weights and a particular boundary condition for the six vertex model in statistical mechanics, we compute explicitly the partition function and show it to be equal to a factorial Schur function, giving a new proof of a theorem of Lascoux.}

\section{Introduction}

A state of the six vertex model, also called square ice, is defined as follows. Between each pair of neighbouring vertices of (a subset of) the square lattice in two spatial dimensions, we place an oriented edge, subject to the condition that at each vertex, exactly two of the four edges are oriented inbound. Around each vertex there are ${4 \choose 2}=6$ potential configurations, hence the name. We shall always consider states of the six vertex model that are bounded spatially with fixed boundary conditions. One can also realise this model in terms of a square array of ice molecules, where an oxygen atom is placed at each vertex, and a hydrogen atom is placed on each edge, with the covalent bonding determined by the hydrogen atom being bonded to the source oxygen atom of its orientation. It is the ice interperetation that we shall use in this paper. In diagrams, we represent oxygen atoms by circles and hydrogen atoms by smaller solid disks, with the existence of covalent bonds being exhibited via spacial proximity in diagrams.

To each vertex $v$ in a state $T$, we associate a Boltzmann weight $w_v(T)$ which depends only on the location of $v$ in the lattice and the local configuration of incoming and outgoing edges containing the vertex $v$. The partition function (from statistical mechanics) for this model is then defined to be
$$
Z=\sum_T\prod_v w_v(T),
$$
where the sum is taken over all possible states $T$ and the product is over all vertices $v$ in the model. It is this function which we intend to evaluate in this paper.

Let $n$ be a positive integer and $\la$ a partition with $n$ parts. So $\la=(\la_1,\la_2,\ldots,\la_n)$ where $\la_1\geq\cdots\geq\la_n\geq 0$ are all integers. We consider a rectangular array of square ice consisting of $n$ rows and $n+\la_1$ columns (the effect of adding more columns to the right will trivially have no effect on the partition function). On this model, we impose the following boundary conditions, which are the same as found in \cite{bbf}. The left and right hand boundaries will be filled with hydrogen atoms with the lower boundary completely devoid of hydrogen atoms. These conditions already imply that along the upper boundary exactly $n$ of the columns will not have hydrogen atoms, we insist that these be in the columns $\la_i+n+1-i$ for $i=1,\ldots,n$. We shall call this boundary condition the $\la$-boundary condition. An example of such a configuration with $\la=(5,4,1)$ is

\[
\setlength{\unitlength}{1.8pt}
\begin{picture}(160,60)
\put(-1,30){\line(1,0){162}}
\put(-1,50){\line(1,0){162}}
\put(-1,10){\line(1,0){162}}
\put(10,0){\line(0,1){61}}
\put(30,0){\line(0,1){61}}
\put(50,0){\line(0,1){61}}
\put(70,0){\line(0,1){61}}
\put(90,0){\line(0,1){61}}
\put(110,0){\line(0,1){61}}
\put(130,0){\line(0,1){61}}
\put(150,0){\line(0,1){61}}
\put(10,10){\circle{10}}
\put(10,30){\circle{10}}
\put(10,50){\circle{10}}
\put(30,10){\circle{10}}
\put(30,30){\circle{10}}
\put(30,50){\circle{10}}
\put(50,10){\circle{10}}
\put(50,30){\circle{10}}
\put(50,50){\circle{10}}
\put(70,10){\circle{10}}
\put(70,30){\circle{10}}
\put(70,50){\circle{10}}
\put(90,10){\circle{10}}
\put(90,30){\circle{10}}
\put(90,50){\circle{10}}
\put(110,10){\circle{10}}
\put(110,30){\circle{10}}
\put(110,50){\circle{10}}
\put(130,10){\circle{10}}
\put(130,30){\circle{10}}
\put(130,50){\circle{10}}
\put(150,10){\circle{10}}
\put(150,30){\circle{10}}
\put(150,50){\circle{10}}
\put(2,10){\circle*{4}}\put(2,50){\circle*{4}}\put(2,30){\circle*{4}}
\put(158,10){\circle*{4}}
\put(158,50){\circle*{4}}
\put(158,30){\circle*{4}}
\put(10,58){\circle*{4}}\put(50,58){\circle*{4}}\put(70,58){\circle*{4}}\put(90,58){\circle*{4}}\put(130,58){\circle*{4}}
\put(70,42){\circle*{4}}
\put(70,22){\circle*{4}}
\put(130,42){\circle*{4}}
\put(10,38){\circle*{4}}
\put(30,38){\circle*{4}}
\put(50,38){\circle*{4}}
\put(150,38){\circle*{4}}
\put(90,38){\circle*{4}}
\put(110,38){\circle*{4}}\put(10,18){\circle*{4}}
\put(30,18){\circle*{4}}
\put(50,18){\circle*{4}}
\put(150,18){\circle*{4}}
\put(90,18){\circle*{4}}
\put(110,18){\circle*{4}}
\put(130,18){\circle*{4}}
\put(22,10){\circle*{4}}\put(22,50){\circle*{4}}\put(22,30){\circle*{4}}
\put(42,10){\circle*{4}}\put(38,50){\circle*{4}}\put(42,30){\circle*{4}}
\put(62,10){\circle*{4}}\put(58,50){\circle*{4}}\put(62,30){\circle*{4}}
\put(78,10){\circle*{4}}\put(82,50){\circle*{4}}\put(82,30){\circle*{4}}
\put(98,10){\circle*{4}}\put(102,50){\circle*{4}}\put(102,30){\circle*{4}}
\put(118,10){\circle*{4}}\put(118,50){\circle*{4}}\put(122,30){\circle*{4}}
\put(138,10){\circle*{4}}\put(142,50){\circle*{4}}\put(138,30){\circle*{4}}
\end{picture}
\setlength{\unitlength}{1pt}
\]

Let $x_1,x_2,\ldots,x_n$ and $a_1,a_2,\ldots,a_{n+\la_1}$ be two alphabets of variables.
For computing the partition function, we use the following set of Boltzmann weights. For the vertical molecule in row $i$ and column $j$, we attach the weight $x_i/a_j$. For the molecule with hydrogen atoms to the north and west, we attach the weight $x_i/a_j-1$. For all other orientations of molecules, we attach the weight 1. Let $Z_\la(x|a)$ denote the corresponding partition function. Our main theorem is the following.

\begin{theorem}\label{main}\cite[Theorem 1]{lascoux}
Up to a monomial, the partition function $Z_\la$ is equal to a factorial Schur function. Explicitly, we have
\[
Z_\la(x|a)=\frac{x^\delta}{a^{(\la+\rho)'}}s_\la(x|a).
\]
\end{theorem}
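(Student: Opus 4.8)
The plan is to evaluate $Z_\la(x|a)$ by a direct, row-by-row analysis of the model, matching the result against the classical tableau formula for the factorial Schur function,
\[
s_\la(x|a)=\sum_{T}\ \prod_{(p,q)\in\la}\bigl(x_{T(p,q)}-a_{T(p,q)+q-p}\bigr),
\]
the sum being over semistandard Young tableaux $T$ of shape $\la$ with entries in $\{1,\dots,n\}$ and $(p,q)$ denoting (row, column). Concretely I would exhibit a bijection, weight-preserving up to a common monomial factor, between states of the six vertex model obeying the $\la$-boundary condition and such tableaux; equivalently one may run the same computation as an induction on the number of rows $n$, peeling off one row at a time.

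\emph{Step 1: the combinatorial structure.} Fix a state $T$. Along each of the $n+1$ horizontal levels of vertical edges (the bottom boundary, the $n-1$ interior levels, and the top boundary) read off the set of columns carrying a hydrogen atom. The boundary conditions pin the top set down to the complement of $\{\la_i+n+1-i:1\le i\le n\}$ and the bottom set down to empty, and the ``two in, two out'' rule at each vertex together with the left/right boundary conditions forces the sets attached to two consecutive levels to interlace. Consequently the $n+1$ sets encode a chain $\emptyset=\mu^{(0)}\subseteq\mu^{(1)}\subseteq\cdots\subseteq\mu^{(n)}=\la$ in which each $\mu^{(k)}/\mu^{(k-1)}$ is a horizontal strip, i.e.\ a semistandard Young tableau of shape $\la$ with entries in $\{1,\dots,n\}$, and this assignment is a bijection.

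\emph{Step 2: the weight of a state.} The only vertices with Boltzmann weight different from $1$ are the vertical molecules (weight $x_i/a_j$ in row $i$, column $j$) and the molecules with hydrogen to the north and west (weight $x_i/a_j-1$), and each such vertex involves the single variable $x_i$ of its row. In the bijection of Step~1 the north--west molecules in a fixed row $i$ occupy precisely the columns indexed by the cells of the horizontal strip $\mu^{(k)}/\mu^{(k-1)}$ carrying entry $i$, and the dictionary between the ice column index $j$ and the content-shifted subscript $T(p,q)+q-p$ of the tableau formula is read off from the top-boundary positions $\la_i+n+1-i$. Using $x_i/a_j-1=(x_i-a_j)/a_j$, the contribution of row $i$ is then a monomial in $x_i$ and the $a$'s times $\prod_{(p,q):T(p,q)=i}\bigl(x_i-a_{i+q-p}\bigr)$; multiplying over all rows and summing over all states, the tableau formula above yields $Z_\la(x|a)=(\text{monomial})\cdot s_\la(x|a)$.

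\emph{Step 3 (the main point).} It remains to verify that this monomial is the same for every admissible state — so that it genuinely factors out of the sum — and equals $x^\delta/a^{(\la+\rho)'}$. This is where the bulk of the work lies: one must count, uniformly over all states, how many vertical molecules occur in each row and which columns the vertical and north--west molecules occupy, and check that the powers of $x_i$ assemble into $x^\delta$ while the accumulated denominators assemble into $a$ raised to the conjugate partition $(\la+\rho)'$. The interlacing structure of Step~1 is exactly what makes these counts state-independent, and I expect this bookkeeping, rather than any conceptual difficulty, to be the main obstacle. A more structural alternative would be to verify the Yang--Baxter (star--triangle) relation for this set of weights, deduce that $a^{(\la+\rho)'}x^{-\delta}Z_\la$ is symmetric in $x_1,\dots,x_n$ and of the correct multidegree, observe that $Z_\la$ vanishes when some $x_i$ is specialised to an appropriate $a_j$ because the relevant ice configurations collapse, and conclude by the vanishing characterisation of factorial Schur functions; but the direct evaluation above is closer to the paper's stated aim of computing the partition function explicitly.
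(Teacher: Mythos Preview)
Your main bijective approach has a genuine gap at Step~1. States of the six-vertex model with the $\la$-boundary condition are in bijection with \emph{strict} Gelfand--Tsetlin patterns with top row $\la+\rho$ (this is the bijection the paper sets up), and these are strictly more numerous than semistandard tableaux of shape $\la$. Concretely, take $n=2$ and $\la=(1,0)$: there are three states (the single second-row GT entry may be $1$, $2$ or $3$) but only two tableaux. The interlacing that the ice rule forces is $t_{i,j}\le t_{i+1,j}\le t_{i,j+1}$, which permits the equality $t_{i+1,j}=t_{i,j+1}$; under any translation to partitions this means consecutive $\mu^{(k)}$ need not even be nested, let alone differ by a horizontal strip. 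Step~2 therefore fails as well, and so does the central expectation of Step~3 that the monomial factor is state-independent: in the same example the three states have Boltzmann weights $(x_1/a_2-1)$, $(x_1/a_2)(x_2/a_1-1)$ and $(x_2/a_1-1)(x_2/a_2-1)$, from which no common monomial can be extracted term by term. The identity $Z_\la=\frac{x^\delta}{a^{(\la+\rho)'}}s_\la$ emerges only after summing, via genuine cancellation among states; this is the Tokuyama phenomenon, and the partition function here is a Tokuyama-type sum over strict patterns, not a tableau sum.

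The ``structural alternative'' you set aside at the end of Step~3 is exactly the paper's proof. The paper establishes the star--triangle relation for these weights, deduces that $x^{-\delta}Z_\la$ is a symmetric polynomial in $x$ of degree at most $|\la|$, proves the vanishing $Z_\la(a_\mu|a)=0$ for $\la\not\subset\mu$ together with an explicit evaluation of $Z_\la(a_\la|a)$, and then expands in the factorial Schur basis and uses these vanishing properties to isolate the single surviving coefficient. So the route you judged less direct is the one that actually works.
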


The notation of the above, used through out this paper, is that whenever $y=(y_1,y_2,\ldots)$ and $\mu=(\mu_1,\mu_2,\ldots)$ is a multi-index, then $y^\mu=y_1^{\mu_1}y_2^{\mu_2}\cdots$. We define the partition $\rho$ to be $(n-1,n-2,\ldots,0)$ and the multi-index $\delta$ as $(0,1,\ldots,n-1)$. For any partition $\la$, $\la'$ denotes its conjugate (the reflection of $\la$, identified with its Young diagram, in the main diagonal). A cell $(i,j)\in\la$ refers to the box in the intersection of the $i$-th row and $j$-th column of the Young diagram of $\la$.

Our proof is fundamentally different to that of \cite{lascoux}. We begin by showing the symmetry of $x^{-\delta}Z_\la(x|a)$ and then use the characteristic vanishing properties of factorial Schur functions to identify the partition function as a factorial Schur function.

The author would like to thank Ben Brubaker, Alain Lascoux and Steven Sam for helpful conversations.

\section{Factorial Schur functions}
We present an overview of the definition and basic properties of factorial Schur functions for which \cite{molev} can be taken as a reference.

\begin{definition}
The factorial Schur function $s_\la(x|a)$ is defined to be
\[
s_\la(x|a)=\sum_T \prod_{\a\in \la} (x_{T(\a)}-a_{T(\a)+c(\a)})
\]
where $c(\a)$ is the content of the cell $\a$ in the Young diagram of $\la$, defined by $c(i,j)=j-i$. The summation is taken over all semistandard Young tableaux $T$ of shape $\la$.
\end{definition}

We now state the primary properties of these factorial Schur functions without proof.

\begin{proposition}
The factorial Schur function $s_\la(x|a)$ is symmetric in the variables $x_1,\ldots,x_n$.
\end{proposition}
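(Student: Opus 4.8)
My plan is to work from the tableau definition, reducing the symmetry to invariance under the adjacent transpositions $x_i\leftrightarrow x_{i+1}$ (which generate $S_n$) and then localising each such transposition to a two-variable computation. Fix $i$. In any semistandard tableau the cells with entries $\le k$ form a Young diagram, so for a tableau $T$ of shape $\la$ the cells with entry $<i$ fill a partition $\nu$, the cells with entry $>i+1$ fill the complement $\la/\mu$ of a partition $\mu$ (with $\nu\subseteq\mu\subseteq\la$), and the cells with entry in $\{i,i+1\}$ fill the skew diagram $\theta:=\mu/\nu$, which, being the union of the horizontal strip of $i$'s and the horizontal strip of $(i+1)$'s, has at most two cells in each column. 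Summing over all tableaux by first fixing the entries of the cells outside $\{i,i+1\}$ — those entries can be prescribed independently of the rest, and the weight they contribute is free of $x_i$ and $x_{i+1}$ — reduces the problem to: for every skew diagram $\theta$ with at most two cells per column, the polynomial
\[
G_\theta=\sum_{S}\ \prod_{\a\in\theta}\bigl(x_{S(\a)}-a_{S(\a)+c(\a)}\bigr)
\]
(the sum over semistandard fillings $S$ of $\theta$ with entries in $\{i,i+1\}$) is symmetric in $x_i$ and $x_{i+1}$.

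To prove this I would go through $\theta$ column by column. A column of $\theta$ containing two cells occupies consecutive rows $r,r+1$ of some column $j$ and is forced to carry $i$ above $i+1$; since $c(r,j)=j-r$ and $c(r+1,j)=j-r-1$, its contribution to the weight is $(x_i-a_{i+j-r})(x_{i+1}-a_{i+j-r})$, which is symmetric in $x_i\leftrightarrow x_{i+1}$. Every other cell of $\theta$ lies in a column containing only that cell; using that $\nu$ and $\mu$ are partitions, one checks that such a cell is never immediately to the left of the top, nor immediately to the right of the bottom, of a two-cell column, and hence is constrained only by the weakly increasing order along its own row. The single-cell cells therefore break up into horizontal runs that are filled independently of one another and of the two-cell columns, each run receiving a weakly increasing word $i^{t}(i+1)^{\ell-t}$. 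With $b_k:=a_{p+k}$ for a shift $p$ read off from the position of the run, the contribution of a run of length $\ell$, summed over the cut $t\in\{0,\dots,\ell\}$, is
\[
F_\ell(x_i,x_{i+1})=\sum_{t=0}^{\ell}\ \prod_{k=1}^{t}(x_i-b_k)\ \prod_{k=t+1}^{\ell}(x_{i+1}-b_{k+1}),
\]
so $G_\theta$ is a product of the symmetric two-cell-column factors and of these $F_\ell$.

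Finally I would show each $F_\ell$ is symmetric by induction on $\ell$, with $F_0=1$ trivial. Peeling off the $t=\ell$ term gives $F_\ell(x,y)=\prod_{k=1}^{\ell}(x-b_k)+(y-b_{\ell+1})F_{\ell-1}(x,y)$, and the telescoping identity
\[
\prod_{k=1}^{\ell}(x-b_k)-\prod_{k=1}^{\ell}(y-b_k)=(x-y)\sum_{t=0}^{\ell-1}\ \prod_{k=1}^{t}(x-b_k)\ \prod_{k=t+1}^{\ell-1}(y-b_{k+1})=(x-y)F_{\ell-1}(x,y)
\]
lets me rewrite it as $F_\ell(x,y)=\prod_{k=1}^{\ell}(y-b_k)+(x-b_{\ell+1})F_{\ell-1}(x,y)$; comparing the two expressions and using that $F_{\ell-1}$ is symmetric yields $F_\ell(x,y)=F_\ell(y,x)$.

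The main obstacle will be not any single computation but the accounting: one must check that the content shifts $a_{S(\a)+c(\a)}$ really do give the two-cell columns a common index, and that the single-cell cells decouple into runs of exactly the form above; once the reduction is in place, only the one-line telescoping identity remains. A slicker but less self-contained route is to first prove the bialternant formula expressing $s_\la(x|a)$ as the ratio $\det[(x_i-a_1)\cdots(x_i-a_{\la_j+n-j})]_{i,j=1}^{n}/\det[(x_i-a_1)\cdots(x_i-a_{n-j})]_{i,j=1}^{n}$: its numerator is alternating in $x_1,\dots,x_n$ and its denominator reduces to the Vandermonde $\prod_{i<j}(x_i-x_j)$, so symmetry is immediate, but the labour then goes into matching the tableau sum with the determinant (for example via the Lindstr\"om--Gessel--Viennot lemma).
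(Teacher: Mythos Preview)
The paper does not prove this proposition: Section~2 states it (and the two propositions following it) without proof, citing \cite{molev} as a general reference. So there is no ``paper's proof'' to compare against.

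Your argument is correct. The localisation to the skew strip $\theta=\mu/\nu$ carrying the entries $i,i+1$ is the standard Bender--Knuth set-up, and the content check
\[
(x_i-a_{i+(j-r)})\,(x_{i+1}-a_{(i+1)+(j-r-1)})=(x_i-a_{i+j-r})(x_{i+1}-a_{i+j-r})
\]
for a two-cell column in rows $r,r+1$ of column $j$ is exactly what makes the factorial case go through. Your structural claim---that the single cells in each row of $\theta$ form one contiguous run, flanked (if at all) by tops of two-cell columns on the left and bottoms on the right, so that the boundary constraints between single and double columns are vacuous---follows from the conjugate-partition inequalities forced by the at-most-two-per-column condition; this is the step that deserves the most care, and it checks out. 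The telescoping identity and the induction on $\ell$ showing $F_\ell(x,y)=F_\ell(y,x)$ are also correct. The bialternant route you mention at the end is the one most references take; your direct tableau computation is a valid, self-contained alternative starting from the definition given here.
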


\begin{proposition}
The collection of factorial Schur functions $s_\la(x|a)$ as $\la$ runs over all partitions with $n$ parts, forms a basis for the ring of symmetric polynomials in $x_1,\ldots,s_n$. Restricting to symmetric polynomials of degree at most $m$, the collection of factorial Schur functions $s_\la(x|a)$ as above with the additional restriction that $|\la|=\sum_i\la_i\leq m$ forms a basis for this vector space.
\end{proposition}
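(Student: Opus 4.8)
The plan is to compare each factorial Schur function with its ordinary (non-factorial) counterpart by isolating top-degree terms, and then to deduce the basis property from a triangularity argument. We already know from the preceding proposition that $s_\la(x|a)$ is a symmetric polynomial in $x_1,\ldots,x_n$, so what remains is to locate these polynomials relative to a known basis and to exploit their degrees.

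First I would compute the highest-degree homogeneous component of $s_\la(x|a)$. Each factor $x_{T(\a)}-a_{T(\a)+c(\a)}$ appearing in the definition is affine-linear in the $x$ variables, with degree-one part simply $x_{T(\a)}$. Hence, for each semistandard tableau $T$ of shape $\la$, the product $\prod_{\a\in\la}(x_{T(\a)}-a_{T(\a)+c(\a)})$ is a polynomial of degree $|\la|$ in the $x$ variables whose top-degree part equals $\prod_{\a\in\la}x_{T(\a)}$. Summing over all semistandard tableaux, the top-degree homogeneous component of $s_\la(x|a)$ is exactly the ordinary Schur polynomial $s_\la(x)=\sum_T\prod_{\a\in\la}x_{T(\a)}$, while every remaining term of $s_\la(x|a)$ has degree strictly less than $|\la|$.

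Next I would invoke the classical fact that the ordinary Schur polynomials $s_\mu(x)$, as $\mu$ ranges over partitions with at most $n$ parts, form a basis for the ring of symmetric polynomials in $x_1,\ldots,x_n$, and that those with $|\mu|\leq m$ form a basis for the subspace of symmetric polynomials of degree at most $m$. Expanding $s_\la(x|a)=\sum_\mu c_{\la\mu}\,s_\mu(x)$ in this basis, the computation of the top component forces $c_{\la\la}=1$, while $c_{\la\mu}=0$ whenever $|\mu|\geq|\la|$ and $\mu\neq\la$ (the case $|\mu|>|\la|$ by the degree bound, the case $|\mu|=|\la|$ by the identification of the leading term). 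Thus the transition matrix $(c_{\la\mu})$ is unitriangular with respect to any total order refining the partial order by size $|\cdot|$: ones on the diagonal, zeros above. Such a matrix is invertible, so the factorial Schur functions span the same space as the ordinary Schur functions and are linearly independent, which establishes the first assertion.

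For the degree-bounded statement the same triangularity does the work directly. Each $s_\la(x|a)$ with $|\la|\leq m$ is a combination of the $s_\mu(x)$ with $|\mu|\leq m$, so the factorial Schur functions with $|\la|\leq m$ all lie in the space of symmetric polynomials of degree at most $m$; since they are exactly as numerous as the dimension of that space and remain unitriangularly related to the ordinary Schur basis restricted to $|\mu|\leq m$, they form a basis for it. The only point requiring care is the bookkeeping of the partial order, namely verifying that a factorial Schur function of a given size never contributes to an ordinary Schur function of strictly larger size; this is immediate from the degree count above, so I anticipate no genuine obstacle, only the need to state the triangularity cleanly.
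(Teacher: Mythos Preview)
Your argument is correct and is the standard triangularity proof: identify the top homogeneous component of $s_\la(x|a)$ as the ordinary Schur polynomial $s_\la(x)$, and then use that the transition matrix to the Schur basis is unitriangular with respect to any linear extension of the partial order by $|\la|$. Nothing is missing; the degree bookkeeping you flag at the end is indeed handled by your degree count.

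There is nothing to compare against in the paper itself: the proposition is one of the ``primary properties of these factorial Schur functions'' that the author explicitly states \emph{without proof}, referring the reader to Molev \cite{molev}. So your proposal supplies a proof where the paper gives none. The argument you give is essentially the one found in the standard references, and it is the natural one given the tableau definition used here.
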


Given a partition $\mu=(\mu_1,\ldots,\mu_n)$, define the sequence $a_\mu=((a_\mu)_1,\ldots,(a_\mu)_n)$ by $(a_\mu)_i=a_{n+1-i+\la_i}$.

\begin{proposition}[Vanishing Theorem]
Suppose that $\la$ and $\mu$ are partitions of at most $n$ parts. Then $$s_\la(a_\mu|a)=0 \text{ if }\la\not\subset\mu,$$ $$s_\la(a_\la|a)=\prod_{(i,j)\in\la} (a_{n+1-i+\la_i}-a_{n-\la_j'+j}).$$
\end{proposition}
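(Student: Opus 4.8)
The plan is to derive both statements from the bialternant formula for factorial Schur functions (see \cite{molev}). Writing $\phi_m(t)=\prod_{k=1}^{m}(t-a_k)$ with $\phi_0(t)=1$, and $\ell_j=\la_j+n-j$, this formula reads
\[
s_\la(x|a)=\frac{\det_{1\le i,j\le n}\bigl[\phi_{\ell_j}(x_i)\bigr]}{\prod_{1\le i<j\le n}(x_i-x_j)}.
\]
I would treat the $a_k$ as indeterminates, so that the rational identities obtained below hold universally, and exploit the one elementary fact that $\phi_m(a_b)=0$ whenever $1\le b\le m$. Set $b_i=\mu_i+n+1-i$, so that $(a_\mu)_i=a_{b_i}$ and $b_1>\dots>b_n\ge 1$; under the substitution $x_i=a_{b_i}$ the Vandermonde denominator never vanishes, so in both parts it suffices to analyse the numerator determinant.

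\emph{Vanishing.} The $(i,j)$ entry $\phi_{\ell_j}(a_{b_i})$ vanishes as soon as $b_i\le\ell_j$, i.e.\ as soon as $\mu_i+n-i<\la_j+n-j$. If $\la\not\subset\mu$, choose $r$ with $\la_r>\mu_r$. Then for all $j\le r\le i$ one has $\la_j+n-j\ge\la_r+n-r>\mu_r+n-r\ge\mu_i+n-i$, so the $(n-r+1)\times r$ submatrix on rows $r,\dots,n$ and columns $1,\dots,r$ is identically zero. Since $(n-r+1)+r>n$ (the $r$ columns involved lie, on the remaining $n-r$ rows, in an $(n-r)$-dimensional space, hence are dependent), the $n\times n$ determinant vanishes, so $s_\la(a_\mu|a)=0$.

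\emph{Evaluation.} When $\mu=\la$ we get $b_i=\ell_i+1$, so $\phi_{\ell_j}(a_{b_i})=0$ exactly when $\ell_i+1\le\ell_j$, i.e.\ when $i>j$; the numerator matrix is therefore upper triangular, and
\[
s_\la(a_\la|a)=\frac{\prod_{i=1}^{n}\prod_{k=1}^{\ell_i}(a_{b_i}-a_k)}{\prod_{1\le i<j\le n}(a_{b_i}-a_{b_j})}.
\]
For $s>i$ we have $b_s=\ell_s+1\le\ell_i$, so every denominator factor $a_{b_i}-a_{b_j}$ occurs among the numerator factors; cancelling them leaves
\[
s_\la(a_\la|a)=\prod_{i=1}^{n}\ \prod_{\substack{1\le k\le\ell_i\\ k\notin\{b_{i+1},\dots,b_n\}}}(a_{b_i}-a_k).
\]
The remaining task is purely combinatorial: for each $i$, identify $\{1,\dots,\ell_i\}\setminus\{b_{i+1},\dots,b_n\}$ with $\{\,n-\la'_j+j : 1\le j\le\la_i\,\}$. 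Subtracting $1$, the set $\{b_{i+1},\dots,b_n\}-1=\{\ell_s:s>i\}$ consists exactly of the $\beta$-numbers $\la_s+n-s$ lying in $\{0,\dots,\ell_i-1\}$; by the Maya-diagram (boundary-path) description of a Young diagram one has $\{0,1,2,\dots\}\setminus\{\la_s+n-s:1\le s\le n\}=\{\,n-1+j-\la'_j:j\ge1\,\}$, so the complementary elements of the range $\{0,\dots,\ell_i-1\}$ are $\{\,n-1+j-\la'_j : j\ge1,\ j-\la'_j\le\la_i-i\,\}$, and since $j\mapsto j-\la'_j$ is strictly increasing with $\la'_{\la_i}\ge i>\la'_{\la_i+1}$, the inequality $j-\la'_j\le\la_i-i$ is equivalent to $j\le\la_i$. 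Shifting back by $1$ and re-indexing the surviving factors by the cells $(i,j)\in\la$ yields $s_\la(a_\la|a)=\prod_{(i,j)\in\la}(a_{n+1-i+\la_i}-a_{n-\la'_j+j})$.

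The only genuinely fiddly step is this last reindexing, and everything else is linear algebra over the diagonal substitutions. If one insists on a fully self-contained account, the real obstacle is instead establishing the bialternant formula from the tableau definition (by a Jacobi--Trudi or lattice-path argument), which I would rather cite, following \cite{molev}. An inductive alternative — via the branching rule that peels off the entries equal to $n$ — also proves the proposition, but the induction is more cumbersome than the determinantal argument above.
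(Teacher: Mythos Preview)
Your argument via the bialternant formula is correct: the vanishing follows from the zero $(n-r+1)\times r$ block when $\la_r>\mu_r$, the evaluation reduces to an upper-triangular determinant when $\mu=\la$, and your Maya-diagram reindexing of the surviving factors is the standard way to finish. One minor point worth making explicit is that $s_\la(x|a)$ is a polynomial in $x$, so the substitution $x_i=a_{b_i}$ is legitimate even before you observe that the Vandermonde does not vanish; otherwise the argument is clean.

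There is, however, nothing to compare against: the paper does not prove this proposition. Section~2 explicitly presents the Vanishing Theorem (along with the symmetry and basis statements) ``without proof,'' citing \cite{molev} as a reference. Your determinantal proof is essentially the one found in that reference, so in effect you have supplied the argument the paper chose to omit.
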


%In the above proposition, we identify $\la$ with its Young diagram, and $\la'$ is the conjugate partition to $\la$.

\section{Strict Gelfand-Tsetlin Patterns and Staircases}
To understand the possible states of square ice with the $\la$-boundary condition, and to connect our combinatorial setup with the staircase language of Lascoux \cite{lascoux}, it will be useful to consider a bijection between such states and two other combinatorial objects, namely strict Gelfand-Tsetlin patterns with top row $\la+\rho$ and staircases with rightmost column missing $\la+\rho$. We now define these other families of objects.

A Gelfand-Tsetlin pattern is a triangular array of natural numbers
\[
\mathfrak{T}=\left\{
\begin{array}{cccccccccc}
t_{1,1} & & t_{1,2} & & t_{1,3} & & \ldots & t_{1,n-1} & & t_{1,n}  \\
 & t_{2,1} & & t_{1,2} & & t_{2,3} & \ldots &  & t_{2,n-1} & \\
 & & \ddots & & & & & \Ddots  & & \\
 & & & & & t_{n,1} & & & &
\end{array}
 \right\}
\]
subject to the inequalities $t_{i,j}\leq t_{i+1,j}\leq t_{i,j+1}$ for all $i$ and $j$. This pattern is further called strict if the additional inequalities $t_{i,j}<t_{i,j+1}$ also hold.

To say that the top row of $\mathfrak{T}$ is equal to $\la+\rho$ is to say that $t_{1,n+1-i}=n+1-i+\la_i$ for all $i$.

A staircase is a semistandard Young tableau of shape $(\la_1+n,\la_1+n-1,\ldots,\la_1)'$ filled with numbers from $\{1,2,\ldots,\la_1+n\}$ with the additional condition that the diagonals are weakly decreasing in the south-east direction when written in the French notation.

To say that a staircase has rightmost column missing $\la+\rho$ means that the integers in the rightmost column comprise all integers from $1$ to $\la_1+n$ inclusive with the exception of those integers of the form $n+1-i+\la_i$ for some $i$.

An example of a staircase for $\la=(5,4,1)$ is

\[
\setlength{\unitlength}{1.46em}
\begin{picture}(4,8)
\multiput(0,0)(0,1){6}{\line(1,0){4}}
\multiput(1,8)(1,-1){4}{\line(0,-1){3}}
\multiput(0,0)(1,0){5}{\line(0,1){5}} \put(0,8){\line(1,0){1}}
\put(0,7){\line(1,0){2}}
\put(0,6){\line(1,0){3}}
\put(0,8){\line(0,-1){3}}
\multiput(0.35,0.28)(1,0){4}{1}
\multiput(0.35,1.28)(1,0){3}{2}
\multiput(0.35,2.28)(1,0){3}{3}
\multiput(1.35,3.28)(1,0){3}{5}
\multiput(0.35,7.28)(1,-1){3}{8}
\put(3.35,4.28){7}
\multiput(0.35,6.28)(1,-1){2}{7}
\multiput(0.35,5.28)(1,-1){2}{6}
\multiput(0.35,4.28)(1,-1){2}{5}
\put(0.35,3.28){4} \put(3.35,2.28){4}
\put(2.35,4.28){6} \put(3.35,1.28){3}
\end{picture}
\setlength{\unitlength}{1pt}
\]
We have chosen to present this particular example because under the bijection we are about to construct, it gets mapped to the sample array of square ice given in the introduction.

\begin{proposition}
Let $\la$ be a partition. There are natural bijections between the following three sets of combinatorial objects
\begin{enumerate}
\item States of the six vertex model with the $\la$ boundary condition,
\item Strict Gelfand-Tsetlin patterns with top row $\la+\rho$, and
\item Staircases whose rightmost column is missing $\la+\rho$.
\end{enumerate}
\end{proposition}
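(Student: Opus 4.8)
The plan is to construct the two bijections in turn, first between states of the six vertex model and strict Gelfand--Tsetlin patterns, then between strict Gelfand--Tsetlin patterns and staircases, since the composite then relates the first and third sets.

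First I would treat the bijection between square ice states and strict Gelfand--Tsetlin patterns. The key observation is that in any state with the $\la$-boundary condition, the orientation data can be read off row by row. Between row $i$ and row $i+1$ there is a horizontal line of edges; record the set of columns $j$ in which the vertical edge crossing that line carries a hydrogen atom bonded upward (equivalently, the edge is oriented, say, northward). I would show, using the ``two in, two out'' rule at each vertex together with the left/right boundary being full of hydrogen and the bottom boundary empty, that reading these columns from the top down produces exactly the rows $t_{1,\bullet}, t_{2,\bullet}, \ldots$ of a triangular array: the top row is forced to be $\la+\rho$ by the upper boundary condition, the $k$-th horizontal line contains exactly $n+1-k$ such columns (an easy induction, since each row of vertices removes exactly one), and the interlacing $t_{i,j}\le t_{i+1,j}\le t_{i,j+1}$ is precisely the constraint imposed by the admissible vertex configurations. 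Strictness $t_{i,j}<t_{i,j+1}$ comes from the fact that the column positions in a single horizontal line are distinct integers. Conversely, given a strict Gelfand--Tsetlin pattern with top row $\la+\rho$, the row data determines the vertical edges, and then the horizontal edges at each vertex are uniquely forced by the ice rule; one checks the resulting configuration is admissible. This establishes (1)$\leftrightarrow$(2), and this is the step I expect to carry the main technical weight, since it requires a careful local analysis of which of the six vertex types can occur given the prescribed Boltzmann weights and boundary data.

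Next I would handle the bijection between strict Gelfand--Tsetlin patterns with top row $\la+\rho$ and staircases missing $\la+\rho$ in the rightmost column. Here the idea is classical: a Gelfand--Tsetlin pattern is equivalent to a chain of interlacing partitions, and one converts the interlacing data into a filling of the conjugate staircase shape $(\la_1+n,\la_1+n-1,\ldots,\la_1)'$. Concretely, the $k$-th row $(t_{k,1}<\cdots<t_{k,n+1-k})$ of the pattern, viewed inside $\{1,\ldots,\la_1+n\}$, has a complement; I would use these complements (or the rows themselves) to fill successive columns of the staircase, so that the rightmost column encodes the top row $\la+\rho$ via its complement, giving exactly the ``missing $\la+\rho$'' condition. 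The semistandardness of the staircase and the extra ``diagonals weakly decreasing to the south-east in French notation'' condition translate directly into the (strict) interlacing inequalities of the pattern, so the map and its inverse are both straightforward to verify once the indexing conventions are fixed.

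The main obstacle will be bookkeeping: getting the indexing conventions consistent across all three objects (rows numbered top-down versus bottom-up, the shift by $\rho$, French versus English notation for the staircase, and the passage to complements) so that the three descriptions genuinely line up, and in particular so that the worked examples of $\la=(5,4,1)$ in the introduction and in this section correspond under the composite bijection. I would verify the whole construction on that example as a consistency check. None of the individual verifications is deep; the content is entirely in setting up the correspondences cleanly and checking that admissibility/strictness conditions match on all three sides.
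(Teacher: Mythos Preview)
Your proposal is correct and follows essentially the same approach as the paper: reading off the column positions of upward-bonded hydrogen atoms between successive rows to obtain the rows of a strict Gelfand--Tsetlin pattern, and then passing to complements in $\{1,\ldots,\la_1+n\}$ to fill the columns of the staircase. The paper's proof is in fact much terser than yours---it simply states the two maps and declares the verifications easy---so your extra detail on why interlacing and strictness correspond to the ice rule, and your remark about checking the $\la=(5,4,1)$ example, are exactly the kind of bookkeeping the paper leaves to the reader.
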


\begin{proof}
For each state of the six vertex model, and each $i=2,\ldots,n$, there are $n+1-i$ occurrences of an oxygen atom between rows $i-1$ and $i$ covalently bonded to the hydrogen atom in row $i-1$ above it. If we let $t_{i,j}$ for $j=1,\ldots,n+1-i$ be the column numbers of these oxygen atoms, then this corresponding collection of integers forms a strict Gelfand-Tsetlin pattern with top row $\la+\rho$.

Given a strict Gelfand-Tsetlin pattern $\mathfrak{T}$ with top row $\la+\rho$, we construct a staircase whose rightmost column is missing $\la+\rho$ in the following manner: We fill column $j+1$ with integers $u_1,\ldots,u_{n-j+\la_1}$ such that
\[
\{1,2,\ldots,n+\la_1 \} = \{u_1,\ldots ,u_{n-j+\la_1} \} \sqcup \{ t_{n+1-j,1},\ldots t_{n+1-j,j-1} \}.
\]
It is easily checked that these maps give the desired bijections.
\end{proof}

With these bijections in hand, it is a straightforward manner to check that our formulation of Theorem \ref{main} is equivalent to \cite[Theorem 1]{lascoux}, noting that a double Schubert polynomial for a Grassmannian permutation is the same as a factorial Schur function.

\section{The star triangle relation}

% Ch 7.2 Bressoud says that star triangle refers to Ising model

We consider a set of Boltzmann weights on a diagonally oriented vertex, corresponding to string numbers $i$ and $j$ where $i$ runs from lower left to top right.

For each of the six admissible orientations of the ice molecule, we attach the following weights: %$_HO^H$: $1/x_i$. $^HO_H$: $1/x_j$. $^HO^H$: $1/x_i-1/x_j$. $_HO_H$: $0$. $O^H_H$: $1/x_i$. $_H^HO$: $1/x_j$.
\[
\setlength{\unitlength}{1.4pt}
\begin{picture}(12,6)
%\put(1,1){\line(1,1){10}} \put(1,11){\line(1,-1){10}}
\put(6,0){\circle{10}}
\put(0.3,-5.7){\circle*{4}}
\put(11.7,5.7){\circle*{4}}
\end{picture}
\setlength{\unitlength}{1pt}
 \  :\frac{1}{x_i}\,,\quad
\setlength{\unitlength}{1.4pt}
\begin{picture}(12,6)
%\put(1,1){\line(1,1){10}} \put(1,11){\line(1,-1){10}}
\put(6,0){\circle{10}}
\put(0.3,5.7){\circle*{4}}
\put(11.7,-5.7){\circle*{4}}
\end{picture}
\setlength{\unitlength}{1pt}
 \ :\frac{1}{x_j}\,,\quad\setlength{\unitlength}{1.4pt}
\begin{picture}(12,6)
%\put(1,1){\line(1,1){10}} \put(1,11){\line(1,-1){10}}
\put(6,0){\circle{10}}
\put(0.3,5.7){\circle*{4}}
\put(11.7,5.7){\circle*{4}}
\end{picture}
\setlength{\unitlength}{1pt}
 \ :\left(\frac{1}{x_i}-\frac{1}{x_j}\right),\quad\setlength{\unitlength}{1.4pt}
\begin{picture}(12,6)
%\put(1,1){\line(1,1){10}} \put(1,11){\line(1,-1){10}}
\put(6,0){\circle{10}}
\put(0.3,-5.7){\circle*{4}}
\put(11.7,-5.7){\circle*{4}}
\end{picture}
\setlength{\unitlength}{1pt}
 \ :0,\!\quad\setlength{\unitlength}{1.4pt}
\begin{picture}(12,6)
%\put(1,1){\line(1,1){10}} \put(1,11){\line(1,-1){10}}
\put(6,0){\circle{10}}
\put(11.7,-5.7){\circle*{4}}
\put(11.7,5.7){\circle*{4}}
\end{picture}
\setlength{\unitlength}{1pt}
\ :\frac{1}{x_i}\,,\quad\setlength{\unitlength}{1.4pt}
\begin{picture}(12,6)
%\put(1,1){\line(1,1){10}} \put(1,11){\line(1,-1){10}}
\put(6,0){\circle{10}}
\put(0.3,-5.7){\circle*{4}}
\put(0.3,5.7){\circle*{4}}
\end{picture}
\setlength{\unitlength}{1pt}
:\frac{1}{x_j}.
\]

Now consider the following diagrams

\[
\setlength{\unitlength}{2pt}
\begin{picture}(40,40)
\put(2,12){\line(1,1){12}} \put(2,28){\line(1,-1){12}}
\put(25,10){\line(1,0){16}} \put(25,30){\line(1,0){16}}
\put(30,-1){\line(0,1){42}}
\put(10,20){\circle{10}}
\put(30,30){\circle{10}}
\put(30,10){\circle{10}}
\put(30,38){\circle{4}}
\put(38,10){\circle{4}}
\put(4.3,25.7){\circle{4}}
\put(30,2){\circle{4}}
\put(4.3,14.3){\circle{4}}
\put(38,30){\circle{4}}
\qbezier(14,24)(20,30)(25,30)
\qbezier(14,16)(20,10)(25,10)
\put(-3,12){$\alpha$}
\put(-3,24){$\beta$}
\put(23.6,38){$\gamma$}
\put(42,28){$\delta$}
\put(42,8){$\epsilon$}
\put(23.6,0){$\zeta$}
\put(17,28.2){?}\put(17,7.52){?}\put(30.9,17.2){?}
\end{picture}\qquad,\qquad
\begin{picture}(40,40)
\put(38,12){\line(-1,1){12}} \put(38,28){\line(-1,-1){12}}
\put(-1,10){\line(1,0){16}} \put(-1,30){\line(1,0){16}}
\put(10,-1){\line(0,1){42}}
\put(30,20){\circle{10}}
\put(10,30){\circle{10}}
\put(10,10){\circle{10}}
\put(10,38){\circle{4}}
\put(2,10){\circle{4}}
\put(35.7,25.7){\circle{4}}
\put(10,2){\circle{4}}
\put(35.7,14.3){\circle{4}}
\put(02,30){\circle{4}}
\qbezier(15,10)(20,10)(26,16)
\qbezier(15,30)(20,30)(26,24)
\put(-6,8.1){$\alpha$}
\put(-6,28){$\beta$}
\put(13,38){$\gamma$}
\put(40,26.8){$\delta$}
\put(40,12){$\epsilon$}
\put(13,0){$\zeta$}
\put(20.6,28.4){?}\put(20.6,7){?}\put(10.8,17.3){?}
\end{picture}\qquad\!\!\!\!\!.
\setlength{\unitlength}{1pt}
\]

We consider boundary conditions $\a$, $\beta$, $\gamma$, $\delta$, $\epsilon$ and $\zeta$, exactly three of which correspond to the placement of a hydrogen atom in the corresponding place in the above diagrams. With these boundary conditions, we consider all possible ways of completing this to an arrangement of square ice by the placement of exactly one hydrogen atom along each string marked with a question mark. Each such state has a corresponding Boltzmann weight, where for the diagonal molecules we attach the weights presented immediately above while for the rectilinear molecules, we attach the same Boltzmann weight as in the introduction, where the row number is determined by the string it is sitting on.

\begin{theorem}[Star Triangle Identity]
The star triangle identity holds in the following sense. We fix boundary conditions and sum the Boltzmann weights of all possible states of the left hand diagram. Then this equals the sum of all the Boltzmann weights of all possible states of the right hand diagram with the same boundary conditions.
\end{theorem}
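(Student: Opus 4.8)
My plan is to prove the identity by a finite case analysis over the boundary data. A boundary condition is a choice of three of the six sites $\alpha,\beta,\gamma,\delta,\epsilon,\zeta$ to carry a hydrogen atom, so there are $\binom{6}{3}=20$ cases. In each case, on the left-hand diagram the three internal strands marked with a question mark carry one hydrogen each, giving $2^{3}$ a priori completions; imposing the ice rule — exactly two inbound edges at each of the three molecules — leaves a short list of admissible states, and the left-hand side of the asserted identity is the sum over these states of the product of the three Boltzmann weights. The right-hand diagram is treated identically, and what must be checked is that the two sums agree for every one of the twenty boundary conditions.

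First I would fix, once and for all, the dictionary translating a local hydrogen configuration into its weight: for the two rectilinear molecules this is the list from the introduction (weight $x/a$ for the vertical configuration, $x/a-1$ for hydrogens to the north and west, $1$ otherwise), and for the diagonal molecule it is the six weights displayed above, read with ``inbound/outbound'' taken along the two diagonal strings. A point to track carefully is that in the left-hand diagram the upper rectilinear molecule sits on string $i$ and the lower one on string $j$, whereas in the right-hand diagram these are exchanged, so the roles of $x_i$ and $x_j$ differ between the two sides; note also that both rectilinear molecules of a given diagram lie in one and the same column, so only a single column variable $a$ enters each computation. With the dictionary in place one works through the twenty cases. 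In the bulk of them the ice rule at the three molecules either admits no completion at all — both sides are then empty sums, equal to $0$ — or forces the internal edges uniquely, in which case both sides are single monomials and one simply checks they coincide; these cases are immediate.

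The substantive cases are the few in which one or both sides admit exactly two admissible internal states; there the identity becomes a two-term relation, for example of the shape
\[
\Bigl(\tfrac{1}{x_i}-\tfrac{1}{x_j}\Bigr)\cdot 1 \cdot \tfrac{x_i}{a}
\;+\;\tfrac{1}{x_j}\cdot \tfrac{x_i}{a}\cdot 1
\;=\;
\tfrac{1}{x_i}\cdot 1\cdot \tfrac{x_i}{a},
\]
each of which is an elementary algebraic identity in $x_i,x_j,a$. I expect the only real obstacle to be the bookkeeping: making sure that every admissible completion is enumerated and assigned its correct weight, and that the labelling of the six boundary sites is kept consistent between the two diagrams. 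There is no conceptual difficulty beyond this — as is typical of Yang--Baxter-type identities, the content is entirely in the combinatorial $2$-in-$2$-out constraint together with a handful of one-line calculations, and the same identity could alternatively be obtained by gauge-transforming a known star--triangle relation for the free-fermionic six vertex model.
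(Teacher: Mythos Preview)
Your proposal is correct and follows essentially the same route as the paper: a direct case analysis over the $\binom{6}{3}=20$ boundary configurations, noting that most cases are immediate and only a handful require a short two-term computation. The paper's proof is in fact terser than yours---it works out a single illustrative case and leaves the remaining nineteen to the reader---so your outline, with its explicit tracking of which string carries $x_i$ versus $x_j$ on each side, is if anything a more careful version of the same argument.
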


\begin{proof}
There are a total of ${6\choose 3}=20$ possible boundary conditions for which this theorem needs to be checked. We will illustrate this with a particular example.

Consider the boundary condition consisting of a hydrogen atom in the $\beta$, $\gamma$ and $\zeta$ positions. Then our purported identity becomes

\[
\setlength{\unitlength}{1.88pt}
\begin{picture}(140,40)
\put(2,12){\line(1,1){12}} \put(2,28){\line(1,-1){12}}
\put(25,10){\line(1,0){15}} \put(25,30){\line(1,0){15}}
\put(30,-1){\line(0,1){42}}
\put(10,20){\circle{10}}
\put(30,30){\circle{10}}
\put(30,10){\circle{10}}
\put(30,38){\circle*{4}}
\put(30,18){\circle*{4}}
\put(4.3,25.7){\circle*{4}}
\put(30,2){\circle*{4}}
\put(15.7,14.3){\circle*{4}}
\put(22,29.5){\circle*{4}}
\put(43,19){$+$}
\qbezier(14,24)(20,30)(25,30)
\qbezier(14,16)(20,10)(25,10)
\put(52,12){\line(1,1){12}} \put(52,28){\line(1,-1){12}}
\put(75,10){\line(1,0){15}} \put(75,30){\line(1,0){15}}
\put(80,-1){\line(0,1){42}}
\put(60,20){\circle{10}}
\put(80,30){\circle{10}}
\put(80,10){\circle{10}}
\put(80,38){\circle*{4}}
\put(80,22){\circle*{4}}
\put(54.3,25.7){\circle*{4}}
\put(80,2){\circle*{4}}
\put(65.7,25.7){\circle*{4}}
\put(72,10.5){\circle*{4}}
\qbezier(64,24)(70,30)(75,30)
\qbezier(64,16)(70,10)(75,10)
\put(93,19){=}
\put(138,12){\line(-1,1){12}} \put(138,28){\line(-1,-1){12}}
\put(99,10){\line(1,0){16}} \put(99,30){\line(1,0){16}}
\put(110,-1){\line(0,1){42}}
\put(130,20){\circle{10}}
\put(110,30){\circle{10}}
\put(110,10){\circle{10}}
\put(110,38){\circle*{4}}
\put(110,18){\circle*{4}}
\put(124.3,25.7){\circle*{4}}
\put(110,2){\circle*{4}}
\put(124.3,14.3){\circle*{4}}
\put(102,30){\circle*{4}}
\qbezier(115,10)(120,10)(126,16)
\qbezier(115,30)(120,30)(126,24)
\end{picture}
\setlength{\unitlength}{1pt}
\]

$$
(\frac{1}{x_i}-\frac{1}{x_j})\frac{x_i}{a}+\frac{1}{x_j}(\frac{x_i}{a}-1)\frac{x_j}{a}=(\frac{x_j}{a}-1)\frac{x_i}{a}\frac{1}{x_j}.
$$
The other 19 cases (the majority of which are simpler than this) are left to the reader.
\end{proof}

\begin{corollary}
We have the identity
$$
\frac{1}{x_{i+1}}Z_\la(\ldots,x_i,x_{i+1},\ldots|a)=\frac{1}{x_i}Z_\la(\ldots,x_{i+1},x_i,\ldots|a).
$$
\end{corollary}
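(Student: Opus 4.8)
The plan is to run the standard ``railway'' (Yang--Baxter) argument on the two adjacent rows, using the Star Triangle Identity just proved. Single out the two rows of the $\la$-ice model carrying the spectral parameters $x_i$ and $x_{i+1}$, and agree to draw the row carrying $x_i$ on top. First I would glue one diagonal vertex, with string labels $i$ and $i+1$, onto the far left of these two rows, so that its two eastern legs are the former left-boundary edges of rows $i$ and $i+1$, and put a hydrogen atom on each of its two new western legs, matching the $\la$-boundary condition, which places a hydrogen on every left-boundary edge. A hydrogen on a genuine boundary edge is bonded to the unique adjacent vertex, so both western hydrogens are bonded to this new vertex; the ice rule (exactly two outbound, i.e. exactly two bonded hydrogens, per vertex) then forces the two eastern edges to carry their hydrogens bonded away from it, so the vertex has a \emph{unique} admissible local state --- the ``both legs to the west'' configuration, whose weight is $1/x_{i+1}$ by the diagonal-vertex table. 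The same count shows the two former left-boundary edges of rows $i$ and $i+1$ stay constrained exactly as in the original model, so deleting the diagonal vertex recovers it; hence the partition function of the augmented diagram equals $\frac{1}{x_{i+1}}Z_\la(\ldots,x_i,x_{i+1},\ldots|a)$.

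Next I would slide the diagonal vertex rightwards through the lattice one column at a time, applying the Star Triangle Identity once for each of the $n+\la_1$ columns. Each such move is legitimate: both sides vanish unless exactly three of the six edges bounding the relevant triangle carry a hydrogen, and in that case the equality is precisely the Star Triangle Identity (its rectilinear vertices being weighted as in the introduction, with the row number read off from the string a vertex sits on). The top and bottom vertical legs of each triangle lie outside rows $i$ and $i+1$ and are held fixed throughout, as is every other edge of the model, so the summed partition function is unchanged after each move and hence after all of them.

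Then I would read off the situation once the diagonal vertex has reached the far right: its two western legs are now the former right-boundary edges of rows $i$ and $i+1$, and its two new eastern legs carry hydrogens (the $\la$-boundary condition again). As before this pins the vertex to a single admissible state, now the ``both legs to the east'' configuration of weight $1/x_i$, and deletion recovers the original model --- except that the two strings have crossed, so the top row now carries $x_{i+1}$ and the row below it carries $x_i$. Hence the same augmented partition function also equals $\frac{1}{x_i}Z_\la(\ldots,x_{i+1},x_i,\ldots|a)$, and comparing the two evaluations gives the corollary.

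The part I expect to be the real work is the bookkeeping at the two ends: checking carefully that the imposed boundary conditions pin the appended diagonal vertex to exactly one admissible local configuration (so that no spurious states are created), that deleting it reproduces precisely the original model on that side, and --- crucially --- that the forced weight is $1/x_{i+1}$ at the left end but $1/x_i$ at the right end, this asymmetry being exactly why the scalar differs on the two sides of the identity. Once that is pinned down, the rest is just iterating the already-established Star Triangle Identity and observing that the outer boundary of the lattice is never disturbed.
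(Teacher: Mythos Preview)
Your proposal is correct and follows essentially the same route as the paper: attach a diagonal crossing to the left of rows $i$ and $i+1$, observe that the left boundary forces it into the unique ``both-west'' configuration of weight $1/x_{i+1}$, push it through column by column via the Star Triangle Identity, and read off the forced ``both-east'' configuration of weight $1/x_i$ on the right, where the two strings (hence the spectral parameters) have swapped. The paper's proof is simply a terser statement of this same railway argument; you have (correctly) filled in the endpoint bookkeeping that the paper leaves implicit.
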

\begin{proof}
The left hand side is the partition function of our square ice model, with a crossing between horizonal strings $i$ and $i+1$ added on the left. All boundary conditions are the same as for the calculation of $Z_\la$. The right hand side is the partition function for the same ice model, but this time the crossing between strings $i$ and $i+1$ is added on the right. Using the star triangle relation proved above, we can incrementally push the crossing of horizonal strings past each vertical string while preserving the partition function, hence proving the Corollary.
\end{proof}

\begin{corollary}
The function $x^{-\delta}Z_\la(x|a)$ is symmetric in $x_1,\ldots,x_n$.
\end{corollary}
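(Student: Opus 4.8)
The plan is to deduce this immediately from the preceding Corollary, which records precisely how $Z_\la$ transforms under an adjacent transposition of the $x$-variables. Write $f(x)=x^{-\delta}Z_\la(x|a)$ and, for $i=1,\ldots,n-1$, let $s_i$ denote the operator interchanging $x_i$ and $x_{i+1}$. Since the $s_i$ generate the symmetric group $S_n$, it suffices to show that $f$ is fixed by each $s_i$.

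First I would track the effect of $s_i$ on the monomial prefactor $x^{-\delta}$. Since $\delta=(0,1,\ldots,n-1)$, the only factors of $x^{-\delta}$ involving $x_i$ or $x_{i+1}$ are $x_i^{-(i-1)}x_{i+1}^{-i}$; interchanging the two variables turns this product into $x_i^{-i}x_{i+1}^{-(i-1)}$, so $s_i$ multiplies $x^{-\delta}$ by $x_{i+1}/x_i$.

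Next I would invoke the Corollary, rewritten as $Z_\la(\ldots,x_{i+1},x_i,\ldots|a)=\frac{x_i}{x_{i+1}}Z_\la(\ldots,x_i,x_{i+1},\ldots|a)$, i.e. $s_i$ multiplies $Z_\la$ by $x_i/x_{i+1}$. Combining the two, $s_i$ multiplies $f$ by $\frac{x_{i+1}}{x_i}\cdot\frac{x_i}{x_{i+1}}=1$, so $f$ is invariant under every adjacent transposition and hence symmetric in $x_1,\ldots,x_n$.

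There is essentially no obstacle here: the argument is a one-line cancellation once the two transformation rules are in hand. The only point demanding care is the bookkeeping of exponents in $\delta$ together with the precise shape of the monomial factor produced by the Corollary, so that the two contributions cancel rather than reinforce; getting either orientation backwards would (incorrectly) suggest that $f$ is only symmetric up to a monomial.
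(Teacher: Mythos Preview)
Your argument is correct and is exactly the approach the paper has in mind: the corollary is stated without proof because it follows immediately from the preceding identity by the adjacent-transposition computation you wrote out. Your bookkeeping with $\delta=(0,1,\ldots,n-1)$ and the factor $x_i/x_{i+1}$ is accurate, so the two contributions cancel as claimed.
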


\begin{corollary}
The function $x^{-\delta}Z_\la(x|a)$ is a polynomial in $x_1,\ldots,x_n$.
\end{corollary}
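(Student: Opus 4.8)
The plan is to combine two inputs: the (obvious) fact that $Z_\la(x|a)$ is a polynomial in $x_1,\dots,x_n$, together with the symmetry of $x^{-\delta}Z_\la$ established in the previous corollary. The leverage comes from the shape of $\delta=(0,1,\dots,n-1)$: among the monomials in the $S_n$-orbit of $x^\delta$, the monomial $x^\delta$ itself is distinguished by having exponent $0$ on $x_1$, and this is exactly what lets symmetry bootstrap polynomiality of $x^{-\delta}Z_\la$ out of mere polynomiality of $Z_\la$.

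First I would observe that $Z_\la$ is a polynomial in $x_1,\dots,x_n$. Every Boltzmann weight occurring in the model equals $x_i/a_j$, $x_i/a_j-1$, or $1$ for suitable $i,j$, hence is a polynomial of degree at most one in the variable $x_i$ of its row, involving no other $x_k$. So for a fixed state $T$ the weight $\prod_v w_v(T)$ is a polynomial in $x_1,\dots,x_n$ (with coefficients in $\Z[a_1^{-1},\dots,a_{n+\la_1}^{-1}]$), and therefore so is the finite sum $Z_\la$. It follows that $g:=x^{-\delta}Z_\la = Z_\la/(x_2x_3^2\cdots x_n^{n-1})$ is a Laurent polynomial in $x_1,\dots,x_n$ whose denominator is a monomial involving only $x_2,\dots,x_n$; it remains to show this Laurent polynomial has no negative exponents.

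Now I would use the symmetry of $g$. For $j=1,\dots,n$ write $\sigma_j$ for the interchange of the variables $x_1$ and $x_j$ (with $\sigma_1$ the identity). Applying $\sigma_j$ to $g\cdot x_2x_3^2\cdots x_n^{n-1}=Z_\la$ and using $\sigma_j(g)=g$ gives $g\cdot\sigma_j(x_2x_3^2\cdots x_n^{n-1})=\sigma_j(Z_\la)$, and the right-hand side is a polynomial in $x_1,\dots,x_n$, being $Z_\la$ with two variables renamed. But the monomial $\sigma_j(x_2x_3^2\cdots x_n^{n-1})$ has exponent $0$ on $x_j$: for $j\ge 2$ the transposition moves the exponent $j-1$ from $x_j$ onto $x_1$, and for $j=1$ the monomial already has $x_1$-exponent $0$. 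Multiplying the Laurent polynomial $g$ by a monomial of $x_j$-degree $0$ thus produces an honest polynomial, which forces every monomial appearing in $g$ to have nonnegative exponent on $x_j$. Since $j$ was arbitrary in $\{1,\dots,n\}$, every monomial of $g$ has all exponents nonnegative; that is, $g=x^{-\delta}Z_\la$ is a polynomial.

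I do not expect a genuine obstacle: the two ingredients are the trivial remark that the Boltzmann weights introduce no $x$-denominators and the already-established symmetry, and the only thing to keep straight is the bookkeeping that makes the special shape of $\delta$ do its job. An alternative route is by iterated divisibility — the corollary rearranges to $x_iZ_\la=x_{i+1}\,Z_\la(\dots,x_{i+1},x_i,\dots)$, whence $x_{i+1}\mid Z_\la$ for $i=1,\dots,n-1$ since $x_i$ and $x_{i+1}$ are coprime, and one then peels off the remaining factors of $x^\delta$ by induction after passing to the symmetric quotient — but this requires more careful tracking of exponents than the one-shot argument above.
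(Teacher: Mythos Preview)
Your proof is correct and follows essentially the same approach as the paper: the paper's one-line argument (``a priori a Laurent polynomial that is a polynomial in $x_1$; since it is symmetric, it must be a polynomial in all $x_i$'') is exactly what you unpack by applying the transpositions $\sigma_j=(1\,j)$ and using that $\delta_1=0$. Your version is more explicit, but the underlying idea---leverage the vanishing first entry of $\delta$ together with $S_n$-symmetry to propagate polynomiality from $x_1$ to every $x_j$---is identical.
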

\begin{proof}
A priori, we know this to be a Laurent polynomial in $x_1,\ldots,x_n$ that is a polynomial in $x_1$. Since it is symmetric, it must thus be a polynomial in $x_1,\ldots,x_n$.
\end{proof}

We remark that the symmetry result proved in this section does not use the upper or lower boundary conditions imposed on our states of square ice, and thus gives a direct proof of the symmetry property of the function $F(u,v;{\bf z})$ from \cite[Theorem 2]{lascoux}.

\section{Proof of the Main Theorem}
Our method of proof is to show that the function $Z_\la(x|a)$ possesses the same characteristic vanishing properties of the factorial Schur functions.

\begin{theorem}
For two partitions $\la$ and $\mu$ of at most $n$ parts, we have
$$Z_\la(a_\mu|a)=0 \text{ unless } \la\subset\mu ,$$
$$Z_\la(a_\la|a)=\prod_{(i,j)\in\la} \left(\frac{a_{n+1-i+\la_i}}{a_{n-\la'_j+j}}-1\right).$$
\end{theorem}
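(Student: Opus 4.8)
The plan is to establish for $Z_\lambda$ the six-vertex analogue of the standard proof of the Vanishing Theorem for factorial Schur functions, arguing by induction on $n$ and peeling off the top row of the lattice at each step. Via the bijection of Section~3 a state is a strict Gelfand--Tsetlin pattern with top row $\lambda+\rho$, and the elementary fact driving everything is that after the substitution $x_i=(a_\mu)_i=a_{\mu_i+n+1-i}$ every Boltzmann weight becomes a Laurent monomial in the $a_j$ except the weight $x_i/a_j-1$ of a vertex with hydrogen to the north and west in row $i$, which becomes $a_{\mu_i+n+1-i}/a_j-1$ and so, treating the $a_j$ as independent indeterminates, vanishes exactly when $j=\mu_i+n+1-i$. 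Hence a state contributes a nonzero term to $Z_\lambda(a_\mu|a)$ only if in every row $i$ all of its north--west vertices lie in column $\mu_i+n+1-i$. Freezing the top boundary at $\lambda+\rho$ and summing over the first horizontal line below it --- equivalently over the second pattern row, which ranges over the partitions $\nu$ interlacing $\lambda$ --- gives the recursion
\[
Z_\lambda(x_1,\dots,x_n|a)=\sum_{\nu}W(x_1;\lambda,\nu;a)\,Z_\nu(x_2,\dots,x_n|a),
\]
where $W(x_1;\lambda,\nu;a)$ is the Boltzmann weight of the unique admissible top row compatible with the two pattern rows: a product of factors $x_1/a_j$ over columns in the second pattern row but not the first, times factors $x_1/a_j-1$ over columns in neither pattern row across which a horizontal hydrogen is being carried.

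Specialising $x_i\mapsto a_{\mu_i+n+1-i}$ turns $Z_\nu(x_2,\dots,x_n|a)$ into $Z_\nu(a_{\tilde\mu}|a)$ with $\tilde\mu=(\mu_2,\dots,\mu_n)$, so by the inductive hypothesis every surviving term has $\nu\subset\tilde\mu$; together with the interlacing $\lambda_{j+1}\le\nu_j$ this forces $\lambda_j\le\mu_j$ for all $j\ge2$. If $\lambda\not\subset\mu$ it follows that $\lambda_1>\mu_1$, and I would then show that for every $\nu$ with $\nu\subset\tilde\mu$ the column $m_1:=\mu_1+n$ is a north--west vertex of the top row, so $W(a_{m_1};\lambda,\nu;a)=0$ and the entire sum vanishes. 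Indeed $\nu\subset\tilde\mu$ makes all entries of the second pattern row $<m_1$; the equation $m_1=\lambda_j+n+1-j$ would give $\lambda_j=\mu_1+j-1$, impossible since $\lambda_1>\mu_1$ and $\lambda_j\le\mu_j\le\mu_1$ for $j\ge2$, so $m_1$ lies in neither pattern row; and exactly one top-row entry (namely $\lambda_1+n$) exceeds $m_1$, so the columns to the left of $m_1$ lying in exactly one of the two pattern rows balance out, which is precisely the condition for column $m_1$ to carry a horizontal hydrogen and hence be a north--west vertex rather than of weight $1$. This yields $Z_\lambda(a_\mu|a)=0$; the base case $n=1$ is the identity $Z_{(\lambda_1)}(x_1|a)=\prod_{j=1}^{\lambda_1}(x_1/a_j-1)$, which vanishes at $x_1=a_{\mu_1+1}$ exactly when $\lambda_1>\mu_1$.

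For the value at $\mu=\lambda$, the numbers $m_i=\lambda_i+n+1-i$ are exactly the top-row entries, and the recursion now forces $\nu=(\lambda_2,\dots,\lambda_n)$; iterating, the whole pattern is forced to be the unique one in which every row is a prefix of the row above it. The corresponding state has no vertical vertices and exactly $\lambda_i$ north--west vertices in row $i$, so its Boltzmann weight is a product of $|\lambda|$ factors $x_i/a_j-1$; a short index count shows the $p$-th north--west vertex of row $i$ (from the left) sits in column $n-\lambda'_p+p$, so after $x_i\mapsto a_{\lambda_i+n+1-i}$ this weight equals $\prod_{i=1}^n\prod_{p=1}^{\lambda_i}\bigl(a_{n+1-i+\lambda_i}/a_{n-\lambda'_p+p}-1\bigr)=\prod_{(i,j)\in\lambda}\bigl(a_{n+1-i+\lambda_i}/a_{n-\lambda'_j+j}-1\bigr)$, as required.

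The step I expect to be the main obstacle is the combinatorial bookkeeping underlying the previous two paragraphs: expressing, as an explicit function of two consecutive pattern rows, which columns of the corresponding row of the lattice are ``vertical'' and which are ``north--west carrying a horizontal hydrogen'', and then verifying that under $\nu\subset\tilde\mu$ together with $\lambda_1>\mu_1$ the distinguished column $m_1$ always ends up of the second type (and, in the $\mu=\lambda$ case, identifying the surviving state's north--west columns with the cells of $\lambda$). The interlacing manipulations and the inductive assembly are routine once this is settled.
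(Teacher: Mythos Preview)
Your approach is correct but genuinely different from the paper's. The paper does not induct on $n$ or use the branching recursion at all: it argues state by state, proving by descending induction on the row index $i$ that the rightmost entry $k_i$ of the $i$-th Gelfand--Tsetlin row satisfies $k_i\le n+1-i+\mu_i$. The key observation there is the same one you use for row~$1$---every column $j$ with $k_{i+1}<j<k_i$ is a north--west vertex in lattice row~$i$---so the substitution $x_i=a_{n+1-i+\mu_i}$ forces $n+1-i+\mu_i\notin(k_{i+1},k_i)$, and since $n+1-i+\mu_i>n-i+\mu_{i+1}\ge k_{i+1}$ by the inductive hypothesis, one gets $n+1-i+\mu_i\ge k_i\ge n+1-i+\lambda_i$. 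Combined with the trivial bound $k_i\ge n+1-i+\la_i$ this yields $\mu_i\ge\la_i$ for every $i$ at once, and when $\mu=\la$ it pins down the unique contributing state in one stroke.

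Your route---peeling off the top row, invoking the inductive hypothesis on $Z_\nu(a_{\tilde\mu}|a)$ to force $\nu\subset\tilde\mu$ and hence $\la_k\le\mu_k$ for $k\ge2$, then killing the remaining terms via the north--west vertex at column $\mu_1+n$---is the six-vertex transcription of the standard vanishing argument for factorial Schur functions, and it works. Your ``balance'' criterion is correct (it amounts to the conservation law $L_{j+1}-L_j=[j\in\text{row }2]-[j\in\text{row }1]$ with $L_1=1$), though note it reduces here to the simpler observation $k_2<m_1<k_1$, which already forces the vertex to be north--west. The paper's argument is shorter because it never needs to track $W(x_1;\la,\nu;a)$ explicitly or set up the recursion; your argument buys a closer parallel with the Schur-function proof and makes the inductive structure of the value $Z_\la(a_\la|a)$ more transparent.
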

\begin{proof}
Fix a state, and assume that this state gives a non-zero contribution to the partition function $Z_\la(a_\mu|a)$. Under the bijection between states of square ice and strict Gelfand-Tsetlin patterns, let $k_i$ be the rightmost entry in the $i$-th row of the corresponding Gelfand-Tsetlin pattern. We shall prove by descending induction on $i$ the inequality
$$
n+1-i+\mu_i\geq k_i.
$$
For any $j$ such that $k_{i+1}<j<k_i$, there is a factor $(x_i/a_j-1)$ in the Boltzmann weight of this state. We have the inequality $n+1-i+\mu_i>n+1-(i+1)+\mu_{i+1}\geq k_{i+1}$ by our inductive hypothesis. Since $(a_\mu)_i=a_{n+1-i+\la_i}$, in order for this state to give a non-zero contribution to $Z_\la(a_\mu|a)$, we must have that $n+1-i+\mu_i\geq k_i$, as required.

Note that for all $i$, we have $k_i\geq n+1-i+\la_i$. Hence $\mu_i\geq\la_i$ for all $i$, showing that $\mu\supset\la$ as required, proving the first part of the theorem.

To compute $Z_\la(a_\la|a)$, notice that the above argument shows that there is only one state which gives a non-zero contribution to the sum. Under the bijection with Gelfand-Tsetlin patterns, this is the state with $t_{i,j}=t_{1,j}$ for all $i,j$. The formula for $Z_\la(a_\la|a)$ is now immediate.
\end{proof}

Now we have collated enough data to be able to prove Theorem \ref{main}.

\begin{proof} We can see, for example using the bijection with Gelfand-Tsetlin patterns, that $Z_\la$ is a polynomial of degree at most $|\la+\rho|$ in $x$. Thus $Z_\la/x^{\delta}$ is a symmetric polynomial of degree at most $|\la|$ in $x$, so we may write it in the form.
$$
x^{-\delta}Z_\la=\sum_{\mu:|\mu|\leq |\la|} c_\mu(a)s_\mu(x|a)$$
for some coefficients $c_\mu$ which are rational functions of the $a$ variables only.

Inducting on $\mu$, we substitute $x=a_\mu$ into the above equation to conclude that $c_\mu(a)=0$ for all $\mu\neq\la$ with $|\mu|\leq |\la|$. By substituting $x=a_\la$, we are also able to calculate $c_\la(a)=(a^{(\la+\delta)'})^{-1}$, completing the proof.\end{proof}

\end{document}